\providecommand\@dotsep{5}
\def\listtodoname{List of Todos}
\def\listoftodos{\@starttoc{tdo}\listtodoname}
\numberwithin{equation}{section}
\newtheorem{theorem}{Theorem}[section]
\newtheorem{proposition}[theorem]{Proposition}
\newtheorem{lemma}[theorem]{Lemma}
\newcommand\R{\mathbb R}
\begin{document}

\title[ Existence of solution for a class of problem in whole .......]
{Existence of solution for a class of problem in whole $\mathbb{R}^N$ without the Ambrosetti-Rabinowitz condition }

\author{Claudianor O. Alves}
\author{Marco A. S. Souto }

\address[Claudianor O. Alves and Marco A.S. Souto]{\newline\indent Unidade Acad\^emica de Matem\'atica
\newline\indent 
Universidade Federal de Campina Grande,
\newline\indent
58429-970, Campina Grande - PB - Brazil}
\email{\href{mailto:coalves@mat.ufcg.edu.br, marco@mat.ufcg.edu.br}{coalves@mat.ufcg.edu.br, marco@mat.ufcg.edu.br}}


%

\pretolerance10000


\begin{abstract} In this paper we study the existence of solution for a class of elliptic problem in whole $\mathbb{R}^N$ without the well known Ambrosetti-Rabinowitz condition. Here, we do not assume any monotonicity condition  on $f(s)/s$ for $s>0$.  

\end{abstract}

\thanks{Claudianor Alves was partially supported by CNPq/Brazil Proc. 304804/2017-7 ; Marco A.S. Souto was partially
supported by CNPq/Brazil Proc. 306082/2017-9 }
\subjclass[2010]{Primary 35A15;  Secondary 35B38; 35J20 } 
\keywords{Variational methods; Critical points; Superlinear problems; Elliptic equations}

\maketitle

\section{Introduction}
 
In the last decades a lot of authors have studied the existence of solution for elliptic problems of the form
$$
\left\{
\begin{array}{l}
-\Delta{u}=f(x,u), \quad \mbox{in} \quad \Omega \\
u=0, \quad \mbox{on} \quad \partial \Omega,
\end{array}
\right.
\leqno{(P)}
$$
where $\Omega \subset \mathbb{R}^N$ is a smooth bounded domain and $f$ is a continuous function with subcritical growth satisfying some technical conditions. In general, the main conditions on $f$ are the following: \\
$$
 \displaystyle \lim_{s \to 0}\frac{f(s)}{s}=0. \leqno{(f_1)}
$$
\noindent There is $q \in (2,2^*)$, where $2^*=\frac{2N}{N-2}$, if $N \geq 3$ or $q \in (2,+\infty)$ if $N=1,2$ such that
$$
\displaystyle \limsup_{|s| \to +\infty }\frac{|f(s)|}{|s|^{q-1}}<+\infty. \leqno{(f_2)}
$$ 
\noindent (Ambrosetti-Rabinowitz condition) There are $\theta >2$ and $M>0$ such that
$$
0<\theta F(s) \leq f(s)s, \quad \quad \mbox{for} \quad |s| \geq M, \leqno{(f_3)}
$$
where $F(s)=\displaystyle \int_{0}^{s}f(t)\,dt$.

By using the Mountain Pass Theorem due to Ambrosetti and Rabinowitz \cite{AR}, it is possible to prove that the energy functional associated with $(P)$, given by
$$
I(u)=\frac{1}{2}\int_{\Omega}|\nabla u|^{2}\,dx-\int_{\Omega}F(u)\,dx, \quad \forall u \in H_{0}^{1}(\Omega),
$$
satisfies the well known $(PS)$ condition. Here, assumption $(f_3)$ permits to prove, of a very easy way, that all $(PS)$ sequences are bounded. However, in the last years, we have observed that in a lot papers, the authors have used a more weak condition that $(f_3)$, more precisely, instead of $(f_3)$ the following conditions are assumed:
$$
 \frac{F(s)}{s^2} \to +\infty \quad \mbox{as} \quad |s| \to +\infty. \leqno{(f_4)} \\
$$
and 
$$
 \mbox{The function} \quad \frac{f(s)}{|s|} \quad \mbox{ is an increasing function of} \quad s \in \mathbb{R} \setminus \{0\}. \leqno{(f_5)}
$$
The condition $(f_5)$ is sometimes  replaced by
$$
\mathcal{F}(s)=f(s)s-2F(s) \quad \mbox{is increasing for} \quad \mbox{for} \quad  s >0, \, \mbox{and decreasing for} \, s<0. \leqno{(f_5)'}
$$

 The literature is large for problems without Ambrosseti-Rabinowitz condition, we would like to cite the papers by Costa and Magalh\~aes \cite{Costa}, Jeanjean and Tanaka \cite{JeanjeanTanka}, Liu and Wang \cite{Liu-Wang}, Miyagaki and Souto \cite{Miyagaki-Souto}, Schechter and Zou \cite{SZ}, Struwe and Tarantello \cite{ST}, Wang and Wei \cite{Wang-Wei}, Zhou \cite{Zhou} and their references. In general, the main tool used in the above mentioned papers is Mountain Pass Theorem with Cerami's condition found in Bartolo, Benci and  Fortunato \cite{BBF}.  

Here, we would like point out that in the seminal paper \cite{Costa}, Costa and Magalh\~aes established the existence of solution for $(P)$ without Ambrosseti-Rabinowitz condition by supposing, among others conditions,  
$$
\displaystyle \liminf_{|s| \to +\infty }\frac{f(s)s-F(s)}{|s|^{\mu}} \geq a>0, \quad 	\leqno{(f_6)} 
$$
with $\mu > \frac{N}{2}(q-2)$ and $q$ as in $(f_2)$. Since that $\Omega$ is a bounded domain, the authors were able to show that $(f_2)$ and $(f_6)$ combine to give the boundedness of $(PS)$ sequences for the energy functional associated with $(P)$, which is a key point to prove  $(PS)$ condition.  In that paper, an interesting point is that the authors did not assume any monotonicity condition on $\frac{f(s)}{s}$ or $\mathcal{F}(s)=f(s)s-2F(s)$ for $s>0$.

The existence of solution for elliptic problem in whole $\mathbb{R}^N$ like  
$$
\left\{
\begin{array}{l}
-\Delta{u}+V(x)u=f(u), \quad \mbox{in} \quad \mathbb{R}^N, \\
u \in H^{1}(\mathbb{R}^N)
\end{array}
\right.
\leqno{(P_1)}
$$
without Ambrosseti-Rabinowitz condition also have been considered in some papers, see for example, Jeanjean \cite{Jeanjean}, Liu \cite{Liu1} and their references.  In \cite{Jeanjean}, Jeanjean has proved a very interesting Abstract Theorem that permits to work with a large class of problem without Ambrosetti-Rabinowitz condition in bounded or unbounded domain. In that paper the author assumes $V=1$ and that there is $D \in [1, +\infty)$ such that 
\begin{equation} \label{ZE}
Z(s) \leq DZ(t), \quad \mbox{for} \quad 0 \leq s \leq t,
\end{equation}
where $Z(s)=\frac{1}{2}f(s)s-F(s)$ for $s \geq 0$. In \cite{Liu1}, Liu used essentially the same arguments explored in \cite{Miyagaki-Souto}, \cite{SZ} and \cite{ST} by supposing that $V$ is $\mathbb{Z}^N$-periodic or 
$$
V(x)<V_\infty=\lim_{|x| \to +\infty}V(x).
$$ 
Related to the function $f$, Liu also assumed a condition like (\ref{ZE}).

Motivated by the above references, the present paper is concerned with the existence of solution for $(P_1)$ without Ambrosseti-Rabinowitz condition. Here, $f:\R \to \mathbb R$ is continuous function that satisfies $(f_1)$ and the following conditions: \\

\noindent There are $p\in\left(2,\frac{2N+4}{N}\right)$  and $C>0$ such that 
$$
|f(s)|\leq C (1+ |s|^{p-1}) \quad \mbox{for all} \quad  s \in \mathbb R; \leqno{(f_7)}
$$

\noindent For each $s_o>0$ there is a $\sigma_o>0$ such that 
$$
\mathcal{F}(s)=sf(s)-2 F(s)\geq \sigma s^2,\mbox { for all  } |s|\geq s_o. \leqno{(f_8)}
$$

Here, it is important to recall that $(f_8)$ is weaker than $(f_3)$.

\vspace{0.5 cm}

 Related to the $V$, we assume that it is continuous and belongs to one of the following classes:  \\
\noindent {\bf Class 1:} \, $V$ is a periodic function
\begin{itemize}
	\item[$(V_1)$] \,  $V$ is $\mathbb{Z}^N$ periodic continuous function, that is, 
	$$
	V(x+y)=V(x), \quad \forall x \in \mathbb{R}^N \quad \mbox{and} \quad \forall y \in \mathbb{Z}^N.
	$$
	\item[$(V_2)$] \, $\displaystyle \inf_{x \in \mathbb{R}^N}V(x)=V_o>0$.
\end{itemize}  

\noindent {\bf Class 2:} \, $V$ is coercive : 
\begin{itemize}
	\item[$(V_3)$]   
	$$V(x) \to +\infty \quad \mbox{as} \quad |x| \to +\infty. $$
\end{itemize}  

\noindent {\bf Class 3:} \, $V$ is a Barstch \& Wang type potential, that is,
\begin{itemize}
	\item[$(V_4)$]  \, $V$ is a continuous function of the form 
	$$
	V(x)=1+\lambda W(x), 
	$$
	where $\lambda >0$ and $W$ is a nonnegative function. 
\end{itemize}  

Our first result is related to the case where the Ambrosetti-Rabinowiz condition only holds at infinity.

\begin{theorem} \label{T0} Assume $(f_1)-(f_3)$ and $(f_5)'$. Suppose that $V$ belongs to Class 1,2 or 3, then, $(P_1)$ has a ground state solution. When $V$ belongs to Class 3, the existence of solution holds for large $\lambda$. 
\end{theorem}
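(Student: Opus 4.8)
The plan is to study the energy functional
\[
I(u)=\frac12\int_{\R^N}\bigl(|\nabla u|^2+V(x)u^2\bigr)\,dx-\int_{\R^N}F(u)\,dx
\]
on the Hilbert space $E$ equipped with $\|u\|^2=\int_{\R^N}(|\nabla u|^2+V(x)u^2)\,dx$, taking $E=H^1(\R^N)$ in Classes~1 and~3 and $E=\{u\in H^1(\R^N):\int_{\R^N}Vu^2<\infty\}$ in Class~2. First I would verify the mountain pass geometry: $(f_1)$ and $(f_2)$ give $|F(s)|\le\varepsilon s^2+C_\varepsilon|s|^q$ with $q\in(2,2^*)$, so that $I(u)\ge\alpha>0$ on a small sphere $\|u\|=\rho$, while $(f_3)$ yields $F(s)\ge c_1|s|^\theta-c_2 s^2$ with $\theta>2$, whence $I(tu_0)\to-\infty$ along any fixed ray. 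This produces a mountain pass level $c\ge\alpha>0$ together with a $(PS)_c$ sequence $(u_n)$.

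Second, I would show that $(PS)$ sequences are bounded, which is precisely the role played by the full Ambrosetti--Rabinowitz condition $(f_3)$. Estimating $I(u_n)-\tfrac1\theta I'(u_n)u_n$ and using $(f_3)$ on $\{|u_n|\ge M\}$ together with $(f_1)$ on $\{|u_n|<M\}$ gives $(\tfrac12-\tfrac1\theta)\|u_n\|^2\le c+o(1)+o(1)\|u_n\|$, so $(u_n)$ is bounded. Because $(f_3)$ holds in full, the ordinary $(PS)$ condition is available and no passage to Cerami sequences is required; this is the meaning of ``the Ambrosetti--Rabinowitz condition only holds at infinity'' in this first theorem.

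The core difficulty is the loss of compactness on the unbounded domain $\R^N$, and I would treat the three classes separately. In Class~2 the coercivity $(V_3)$ makes the embedding $E\hookrightarrow L^r(\R^N)$ compact for every $r\in[2,2^*)$, so a bounded $(PS)_c$ sequence converges strongly and the mountain pass theorem directly yields a nontrivial solution. In Class~1 the functional is invariant under $\mathbb Z^N$-translations and $(PS)_c$ genuinely fails; here I would apply Lions' vanishing lemma to the bounded sequence: if vanishing occurred, $(f_1)$--$(f_2)$ would force $u_n\to0$ in $L^q$ and hence $\|u_n\|\to0$, contradicting $c>0$. Non-vanishing then provides $y_n\in\mathbb Z^N$ with $u_n(\cdot+y_n)\rightharpoonup u\neq0$, and the $\mathbb Z^N$-periodicity of $V$ guarantees that the translates remain a $(PS)_c$ sequence, so the subcritical growth and local compactness give $I'(u)=0$. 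In Class~3 I would reproduce the Bartsch--Wang mechanism: for $\lambda$ large the penalization $\lambda\int_{\R^N}Wu^2$ confines mass to the well $\{W=0\}$ and rules out escape to infinity, thereby restoring compactness of the relevant $(PS)$ sequences above a threshold $\lambda^\ast$.

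Finally, to produce a genuine \emph{ground state}, I would exploit $(f_5)'$ through its consequence that $\mathcal F(s)=f(s)s-2F(s)\ge0$ for all $s$ (indeed $\mathcal F(0)=0$ and $\mathcal F$ is monotone on each side of the origin), together with the identity $I(w)=\tfrac12\int_{\R^N}\mathcal F(w)$ valid at every critical point $w$. Setting $\mathfrak m=\inf\{I(w):w\neq0,\ I'(w)=0\}$ over the nonempty set of nontrivial solutions found above, a minimizing sequence is bounded by the same AR estimate and bounded away from $0$ in norm, so that (after translation in Class~1, passing to an a.e. limit) its limit $w_0$ is nontrivial and satisfies $I'(w_0)=0$; Fatou applied to the nonnegative integrand $\mathcal F$ then gives $I(w_0)=\tfrac12\int\mathcal F(w_0)\le\liminf I(w_n)=\mathfrak m$, while $I(w_0)\ge\mathfrak m$ by definition, so $w_0$ is a ground state. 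I expect the main obstacle to lie entirely in the compactness analysis of Classes~1 and~3---excluding vanishing and dichotomy in the periodic case, and making the size of $\lambda$ explicit in the Bartsch--Wang case---whereas the variational geometry and the identification of the ground state are comparatively routine once $(f_5)'$ is read as the sign condition $\mathcal F\ge0$.
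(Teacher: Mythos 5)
Your argument breaks down at the boundedness step, and the breakdown is precisely the difficulty this theorem is designed to overcome. The paper's $(f_3)$ is the Ambrosetti--Rabinowitz inequality only for $|s|\geq M$ (that is what ``only at infinity'' means); it is not assumed globally, so your assertion that ``$(f_3)$ holds in full'' misreads the hypothesis. In the estimate of $I(u_n)-\frac{1}{\theta}I'(u_n)u_n$ you use $(f_3)$ on $\{|u_n|\geq M\}$ and claim $(f_1)$ handles $\{|u_n|<M\}$. But $(f_1)$ only gives the smallness $|f(s)s-\theta F(s)|\leq \frac{(\theta-2)V_o}{4\theta}|s|^2$ on a small interval $|s|\leq s_o$; on the intermediate band $\mathcal{A}_n=\{x\,:\,s_o\leq |u_n(x)|\leq M\}$ the integrand is merely bounded by continuity, so you pick up an error term $-M_o|\mathcal{A}_n|$. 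On a bounded domain one has $|\mathcal{A}_n|\leq |\Omega|$ and the argument closes (this is essentially Costa--Magalh\~aes), but in $\mathbb{R}^N$ nothing bounds $|\mathcal{A}_n|$ a priori: the crude bound $|\mathcal{A}_n|\leq s_o^{-2}\|u_n\|_{2}^{2}$ reintroduces a quadratic term whose constant is not small, and the inequality $(\frac{1}{2}-\frac{1}{\theta})\|u_n\|^2\leq c+o(1)+o(1)\|u_n\|$ you claim simply does not follow. This is exactly where the paper uses $(f_5)'$: it yields condition $(g_4)$, namely $\mathcal{G}(s)=sg(s)-2G(s)>0$ for $0<|s|\leq M$, and then the identity $J(v_n)-\frac{1}{2}J'(v_n)v_n=c+o_n(1)$, valid along the Cerami sequence, gives first the measure bound $\sigma_o|\mathcal{A}_n|\leq c+o_n(1)$; only with this bound in hand does the AR-at-infinity computation produce boundedness of the sequence.

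A second, related misreading: you relegate $(f_5)'$ entirely to the ground-state identification at the end (via $\mathcal{F}\geq 0$ and Fatou), whereas in the paper it is indispensable already for the boundedness of the Cerami sequence, as just explained; correspondingly, working with a Cerami rather than a plain Palais--Smale sequence is not an optional refinement here but part of how that proposition is stated and used. Your treatment of the three classes of potentials (compact embedding for the coercive case, Lions' lemma plus $\mathbb{Z}^N$-invariance for the periodic case, the Bartsch--Wang confinement estimates for large $\lambda$) does follow the paper's route, but all of it presupposes the boundedness step, which as written is incorrect.
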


The Theorem \ref{T0} is a crucial step to prove our second result that establishes the existence of solution for $(P_1)$ without Ambrosetti-Rabinowiz condition and it has the following statement

\begin{theorem} \label{T1} Assume $(f_1),(f_7)-(f_8)$ and that $V$ belongs to Class 1,2 or 3. Then, $(P_1)$ has a ground state solution. When $V$ belongs to Class 3, the existence of solution holds for large $\lambda$. 
	
\end{theorem}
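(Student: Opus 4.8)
The plan is to deduce Theorem \ref{T1} from Theorem \ref{T0} by an approximation argument that transfers the conclusion from nonlinearities satisfying the Ambrosetti--Rabinowitz condition to the weaker setting $(f_7)$--$(f_8)$. Throughout I work in the Hilbert space $E$ obtained by completing $C_c^\infty(\mathbb R^N)$ under $\|u\|^2=\int_{\mathbb R^N}(|\nabla u|^2+V(x)u^2)\,dx$ (for Class 1 this is $H^1(\mathbb R^N)$ with an equivalent norm, since a continuous $\mathbb Z^N$-periodic $V$ is bounded and, by $(V_2)$, bounded below by $V_o>0$; for Classes 2,3 it is a smaller space enjoying better compactness), with energy $I(u)=\frac12\|u\|^2-\int_{\mathbb R^N}F(u)\,dx$. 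A preliminary observation, used repeatedly, is that $(f_8)$ forces $F(s)/s^2\to+\infty$ as $|s|\to\infty$: since $\big(F(s)/s^2\big)'=\mathcal F(s)/s^3\ge \sigma/s$ for $s\ge s_o$, integration gives the claim. In particular $F\ge -C$, and $I$ has the mountain--pass geometry produced by $(f_1)$ and $(f_7)$, with a uniform lower bound $I(u)\ge\alpha>0$ on a small sphere $\|u\|=r$.

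First I would construct, for each $n$, a continuous $f_n$ satisfying the hypotheses $(f_1)$--$(f_3)$ and $(f_5)'$ of Theorem \ref{T0}, agreeing with $f$ on a growing set, converging to $f$, with $f_n\ge f$, and for which $(f_7)$--$(f_8)$ hold with constants $C,\sigma$ \emph{independent of $n$}. Concretely I would modify $f$ only for large argument, replacing it there by a suitable super-$p$ power tail chosen so that $\mathcal F_n$ becomes increasing for $s>0$ (decreasing for $s<0$) and the Ambrosetti--Rabinowitz inequality holds past a threshold $M_n$, the perturbation being small enough that $(f_7)$--$(f_8)$ survive with fixed constants. Applying Theorem \ref{T0} to each $f_n$ (for Class 3 with $\lambda$ large, uniformly in $n$) yields a ground state $u_n\in E$ of the approximate problem, with $I_n'(u_n)=0$ and $I_n(u_n)=c_n$. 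Because $f_n\ge f$ one has $I_n\le I$, so evaluating along a fixed admissible path gives $c_n\le C$, while the uniform mountain--pass geometry gives $c_n\ge\alpha>0$.

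The heart of the proof, and the step I expect to be hardest, is a uniform bound $\sup_n\|u_n\|<\infty$. From $I_n'(u_n)=0$ one has $\tfrac12\int\mathcal F_n(u_n)=c_n\le C$, and the interplay of $(f_8)$ with the \emph{restricted} exponent $p<\frac{2N+4}{N}$ enters through Gagliardo--Nirenberg: because $p<2+\frac4N$, the exponent $\tfrac{N(p-2)}{2}$ governing the gradient in $\int|u_n|^p\le C\,\|u_n\|_{L^2}^{(1-\beta)p}\|\nabla u_n\|_{L^2}^{\,N(p-2)/2}$ is strictly below $2$. Feeding this and $(f_7)$ (with the near-zero smallness from $(f_1)$) into the identity $\tfrac12\|u_n\|^2=c_n+\int F_n(u_n)$ produces, \emph{once $\|u_n\|_{L^2}$ is controlled}, an inequality $\|u_n\|^2\le C+C\|u_n\|^{\gamma}$ with $\gamma=\tfrac{N(p-2)}{2}<2$, which forces boundedness. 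The genuine obstacle is precisely this control of $\|u_n\|_{L^2}$: on $\mathbb R^N$ the quantity $\int\mathcal F_n(u_n)$ does \emph{not} bound $\|u_n\|_{L^2}$, because the region $\{|u_n|<s_o\}$ has infinite measure while $\mathcal F_n=o(s^2)$ near $0$, so the Costa--Magalh\~aes estimate does not close by itself. This is exactly why the argument is run on the \emph{approximate} solutions: the Ambrosetti--Rabinowitz structure of $f_n$ supplies the missing lower control needed to absorb the low-amplitude region, and the bound is uniform in $n$ only because $(f_8)$ fixes the constants.

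With $\sup_n\|u_n\|<\infty$ I would extract $u_n\rightharpoonup u$ in $E$; by $(f_1)$, $(f_7)$ and subcritical convergence, $u$ is a weak solution of $(P_1)$, i.e. $I'(u)=0$. Nontriviality and strong convergence are where the three classes of $V$ are handled separately. For Class 2 ($V$ coercive) the embedding $E\hookrightarrow L^q(\mathbb R^N)$ is compact for $2\le q<2^*$, giving $u\ne0$ and $u_n\to u$ at once; for Class 3 the deep potential well for large $\lambda$ supplies the same compactness in the spirit of Bartsch--Wang; and for Class 1 (periodic $V$) I would rule out vanishing by a Lions-type argument (if $\sup_{y}\int_{B_R(y)}u_n^2$ vanished then $u_n\to0$ in $L^p$ and $(f_7)$ would force $c_n\to0$, contradicting $c_n\ge\alpha$), and then restore compactness by translating $u_n$ along $y_n\in\mathbb Z^N$, using the $\mathbb Z^N$-invariance of $I$. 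Finally, to identify $u$ as a ground state of $(P_1)$, I would pass the levels $c_n$ to the limit and compare $I(u)$ with $\liminf_n c_n$ using Fatou on the $\mathcal F$-term together with $(f_8)$, concluding that $I(u)$ equals the ground-state level of the limiting functional.
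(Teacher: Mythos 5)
Your overall strategy coincides with the paper's: perturb $f$ so that Ambrosetti--Rabinowitz structure holds, solve the perturbed problems, prove a bound uniform in the perturbation using $(f_8)$ together with $p<\frac{2N+4}{N}$, and pass to the limit (the paper uses $g_\varepsilon(s)=f(s)+\varepsilon|s|^{q-2}s$ with $q=\frac{2N+4}{N}$ and $\varepsilon=\frac1n\to 0$). But as written your proof has two genuine gaps. The first is that the approximating sequence you postulate cannot exist. You need each $f_n$ to satisfy $(f_5)'$, i.e. $\mathcal F_n(s)=sf_n(s)-2F_n(s)$ increasing on $(0,+\infty)$, while $f_n$ agrees with $f$ on a growing set (or even merely $f_n\to f$ pointwise with $n$-independent growth constants, which gives $F_n\to F$ and hence $\mathcal F_n\to\mathcal F$ pointwise). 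A pointwise limit of increasing functions is nondecreasing, so $\mathcal F$ itself would have to be nondecreasing on $(0,+\infty)$. Under $(f_1),(f_7),(f_8)$ this is exactly what is \emph{not} assumed --- dropping monotonicity of $\mathcal F$ is the point of Theorem \ref{T1} --- so for the nonlinearities the theorem is meant to cover, no such $f_n$ exist and the black-box reduction to Theorem \ref{T0} fails at the first step. The paper escapes this because its Section 2 existence result is proved under $(g_1)$--$(g_4)$, where $(g_4)$ asks only that $\mathcal G>0$ for $0<|s|\le M$ (positivity, not monotonicity); the perturbation $f(s)+\varepsilon|s|^{q-2}s$ satisfies $(g_1)$--$(g_4)$ because $(f_8)$ already forces $\mathcal F(s)>0$ for $s\ne 0$. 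So Theorem \ref{T1} has to be deduced from that finer statement, not from the literal statement of Theorem \ref{T0}.

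The second gap is in the uniform bound, which you yourself flag as the heart of the matter. You reduce it to controlling $\|u_n\|_{L^2(\mathbb{R}^N)}$ and then assert that the Ambrosetti--Rabinowitz structure of $f_n$ supplies the missing control of the low-amplitude region, uniformly in $n$. That cannot be right: the AR inequality for $f_n$ holds with thresholds and constants that necessarily degenerate as $n\to\infty$ (otherwise $f$ itself would inherit an AR-type inequality in the limit), and in any case AR is a condition for $|s|\ge M_n$, so it says nothing about the set $\{|u_n|<s_o\}$. The paper's estimate needs no global $L^2$ bound at all: by $(f_1)$ one has $G_\varepsilon(s)\le\frac{V_o}{4}s^2$ for $|s|\le s_o$, so the low-amplitude part of $\int G_\varepsilon(u_\varepsilon)\,dx$ is absorbed into $\frac12\|u_\varepsilon\|^2$ using $V\ge V_o$; on the high-amplitude set, $(f_8)$ gives $\sigma\int_{\{|u_\varepsilon|\ge s_o\}}|u_\varepsilon|^2\,dx\le 2c_0$, and the interpolation is run \emph{restricted to that set}: since $\frac{N(q-2)}{2}=2$ for $q=\frac{2N+4}{N}$, H\"older gives
$$
\int_{\{|u_\varepsilon|\ge s_o\}}|u_\varepsilon|^{q}\,dx\le\Big(\int_{\mathbb{R}^N}|u_\varepsilon|^{2^*}dx\Big)^{\frac{N-2}{N}}\Big(\int_{\{|u_\varepsilon|\ge s_o\}}|u_\varepsilon|^{2}dx\Big)^{\frac{2}{N}}\le S^{-1}\Big(\frac{2c_0}{\sigma}\Big)^{\frac{2}{N}}\|u_\varepsilon\|^2,
$$
and this term carries the small factor $\varepsilon$ in front, so it is absorbed for $\varepsilon\le\varepsilon_0$; the remaining $p$-growth on the high set is handled by $|s|^p\le\varepsilon|s|^q+C_\varepsilon|s|^2$ together with the same $(f_8)$-bound. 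Your Gagliardo--Nirenberg step can be repaired in precisely this way (interpolate between $L^2$ and $L^{2^*}$ on $\{|u_n|\ge s_o\}$ only, where $p<\frac{2N+4}{N}$ makes the resulting exponent of $\|u_n\|$ strictly less than $2$), but as written the estimate does not close. The remainder of your argument --- weak limit, passage to a solution, and the class-by-class nontriviality via compact embedding, Bartsch--Wang, or Lions plus translation --- matches the paper once these two points are fixed.
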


As a consequence of Theorem \ref{T1}, we can consider a more general class of problems like 
$$
\left\{
\begin{array}{l}
-\Delta{u}+V(x)u=f(u) + \beta h(u) , \quad \mbox{in} \quad \mathbb{R}^N,\\
u \in H^{1}(\mathbb{R}^N),
\end{array}
\right.
\leqno{(P_2)}
$$
where $\beta >0$ and $V,f$ satisfy the conditions of Theorem \ref{T1}. Related to the $h$, we assume that it is continuous and satisfies: 
\begin{itemize}
	\item[$(h_1)$] $ \displaystyle{\lim_{s\to 0}\frac{h(s)}{s}=0}$ ;
	\item[$(h_2)$] There are $C>0$ and $r \in (2,2^*)$  such that $|h(s)|\leq C(1+ |s|^{r-1})$ for all $s\in \mathbb R$;
	\item[$(h_3)$] For each $s_o>0$, there is a $\sigma_o>0$ such that $$\mathcal{H}(s)=sh(s)-2 H(s)\geq \sigma_o s^2,\mbox { for all  } |s|\geq s_o.$$
	\item[$(h_4)$] There is $\Upsilon>0$ such that
	$$
0<	\frac{h(\Upsilon)}{f(\Upsilon)}=\max\left\{\frac{h(t)}{f(t)}\,:\, t \in [0,\Upsilon]\right\} \quad \mbox{and} \quad 	\frac{h(\Upsilon)}{f(\Upsilon)}=\min\left\{\frac{h(t)}{f(t)}\,:\, t \in [\Upsilon,+\infty)\right\}.
	$$
\end{itemize}

We would like point out that $(h_4)$ holds, for example, if $h(s)/f(s)$ is a nondecreasing function for $s>0$.

Our third theorem is the following 
\begin{theorem} \label{T2} Assume $(f_1),(f_7)-(f_8)$, $(h_1)-(h_4)$ and that $V$ belongs to Class 1,2 or 3. Then, setting  $\beta^*=\frac{f(\Upsilon)}{h(\Upsilon)}$, there is $\Upsilon^*>0$ such that $(P_2)$ has a nontrivial solution for all $\beta  \in [0, \beta^*)$ and $\Upsilon \in [\Upsilon^*, +\infty)$. As in Theorem 1, the existence of solution for Class 3 holds when $\lambda$ is large.
\end{theorem}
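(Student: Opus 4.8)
The plan is to deduce Theorem \ref{T2} from Theorem \ref{T1} by truncating $h$ above $\Upsilon$ so that $f+\beta h$ is replaced by a nonlinearity satisfying $(f_1),(f_7),(f_8)$, and then to recover a genuine solution of $(P_2)$ via a uniform a priori bound. Since we seek a nonnegative solution, set $h(s)=f(s)=0$ for $s\le 0$ and define, for $s\ge 0$,
\begin{equation*}
h_\Upsilon(s)=
\begin{cases}
h(s), & 0\le s\le \Upsilon,\\[2pt]
\dfrac{h(\Upsilon)}{f(\Upsilon)}\,f(s), & s>\Upsilon,
\end{cases}
\qquad g_\Upsilon(s)=f(s)+\beta\,h_\Upsilon(s),\quad H_\Upsilon(s)=\int_0^s h_\Upsilon(t)\,dt.
\end{equation*}
First I would verify that $g_\Upsilon$ satisfies $(f_1),(f_7),(f_8)$. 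Condition $(f_1)$ is immediate from $(f_1)$ and $(h_1)$. For the growth, the first identity in $(h_4)$ gives $h(t)\le \frac{h(\Upsilon)}{f(\Upsilon)}f(t)$ for $t\in[0,\Upsilon]$, while $h_\Upsilon(s)=\frac{h(\Upsilon)}{f(\Upsilon)}f(s)$ for $s>\Upsilon$; together with $\beta<\beta^{*}=\frac{f(\Upsilon)}{h(\Upsilon)}$ (so $\beta\,\tfrac{h(\Upsilon)}{f(\Upsilon)}<1$) this yields the \emph{uniform} sandwich
\begin{equation*}
0\le g_\Upsilon(s)\le 2f(s),\qquad s\ge 0,
\end{equation*}
so $(f_7)$ holds with the exponent $p$ of $f$ and with a constant independent of $\Upsilon$.

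Second, for $(f_8)$ I would write $\mathcal{F}_\Upsilon(s)=\mathcal{F}(s)+\beta\,\mathcal{H}_\Upsilon(s)$, where $\mathcal{H}_\Upsilon(s)=sh_\Upsilon(s)-2H_\Upsilon(s)$. On $[s_o,\Upsilon]$ this equals $\mathcal{F}(s)+\beta\mathcal{H}(s)\ge \sigma s^{2}$ by $(f_8)$ and $(h_3)$. For $s>\Upsilon$ a direct computation gives $\mathcal{H}_\Upsilon(s)=\frac{h(\Upsilon)}{f(\Upsilon)}\mathcal{F}(s)+2\big(\frac{h(\Upsilon)}{f(\Upsilon)}F(\Upsilon)-H(\Upsilon)\big)$; the decisive point is that integrating $h(t)\le\frac{h(\Upsilon)}{f(\Upsilon)}f(t)$ on $[0,\Upsilon]$ (again $(h_4)$) shows $H(\Upsilon)\le \frac{h(\Upsilon)}{f(\Upsilon)}F(\Upsilon)$, so the last constant is nonnegative and
\begin{equation*}
\mathcal{F}_\Upsilon(s)\ge \mathcal{F}(s)\ge \sigma s^{2},\qquad s\ge s_o,
\end{equation*}
with $\sigma$ the constant furnished by $(f_8)$ and \emph{independent of} $\Upsilon$ for $\Upsilon\ge s_o$. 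Thus $g_\Upsilon$ meets the hypotheses of Theorem \ref{T1}, which supplies a ground state $u_\Upsilon\ge 0$ (for large $\lambda$ in Class 3) of the truncated problem $-\Delta u+V(x)u=g_\Upsilon(u)$.

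Finally I would establish an a priori bound $\|u_\Upsilon\|_{\infty}\le \Upsilon^{*}$ uniform in $\Upsilon$, and then take this $\Upsilon^{*}$ as the threshold in the statement. The ground-state level $c_\Upsilon=\frac12\int_{\R^N}\mathcal{F}_\Upsilon(u_\Upsilon)$ is squeezed between the mountain-pass levels of the problems with nonlinearities $f$ and $2f$ (both covered by Theorem \ref{T1}), hence is bounded above and below uniformly in $\Upsilon$; combining $c_\Upsilon\le b$ with the uniform inequality $\mathcal{F}_\Upsilon(s)\ge\sigma s^{2}$ produces a uniform $H^{1}(\R^N)$ bound on $u_\Upsilon$, and a Brezis--Kato/Moser iteration—legitimate because $g_\Upsilon(s)\le 2C(1+s^{p-1})$ with $p<2^{*}$, uniformly—upgrades it to $\|u_\Upsilon\|_\infty\le \Upsilon^{*}$. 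Choosing $\Upsilon\ge\Upsilon^{*}$ then forces $0\le u_\Upsilon\le \Upsilon$, the range on which $h_\Upsilon(u_\Upsilon)=h(u_\Upsilon)$, so that $u_\Upsilon$ solves $(P_2)$. I expect the main obstacle to be exactly this last step: making every constant in the energy identity and in the iteration uniform in $\Upsilon$, which is what forces $\Upsilon\ge\Upsilon^{*}$ and which rests decisively on the two comparisons in $(h_4)$—one controlling growth from above, the other guaranteeing $\mathcal{F}_\Upsilon\ge\mathcal{F}$ with the $\Upsilon$-independent constant $\sigma$.
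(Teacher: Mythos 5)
Your proposal is correct and follows essentially the same route as the paper: the identical truncation $h_\Upsilon$, verification of $(f_1),(f_7),(f_8)$ for $f+\beta h_\Upsilon$ via $(h_4)$ (including the key inequality $H(\Upsilon)\leq \frac{h(\Upsilon)}{f(\Upsilon)}F(\Upsilon)$), application of Theorem \ref{T1}, and a uniform-in-$\Upsilon$ energy bound upgraded by elliptic bootstrap to $\|u_\Upsilon\|_\infty\leq K=\Upsilon^*$, after which the truncation is inactive. Your computation of $\mathcal{F}_\Upsilon(s)$ for $s>\Upsilon$ even carries the correct constant $2\beta\bigl(\frac{h(\Upsilon)}{f(\Upsilon)}F(\Upsilon)-H(\Upsilon)\bigr)$, where the paper's displayed formula has a sign and factor slip.
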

	
The plan of the paper is as follows: Section 2 deals with Theorem \ref{T0}, while Sections 3 and 4 are devoted to the proofs of Theorems \ref{T1} and \ref{T2} respectively. In Section 5, we point out that if $\displaystyle{\frac{h(s)}{f(s)} \to +\infty}$, we also have an existence result replacing  $(h_3)$ by a weaker assumption, namely: \, For each $s_o>0$, there is a $\sigma_o>0$ such that 
$$
\mathcal{H}(s)=sh(s)-2 H(s)\geq -\sigma_o s^2,\mbox { for all  } |s|\geq s_o. 	\leqno{(h'_3)}
$$

\section{Proof of Theorem \ref{T0}}

In this section, we will show the existence of solution for a class of auxiliary problem, which proves Theorem \ref{T0} and  will be used in the proof of Theorem  \ref{T1}. Specifically, we will study the existence of solution for the following class of problem
$$
\left\{
\begin{array}{l}
-\Delta{u}+V(x)u=g(u), \quad \mbox{in} \quad \mathbb{R}^N, \\
u \in H^{1}(\mathbb{R}^N),
\end{array}
\right.
\leqno{(AP)}
$$
where $V$ belongs to Class 1,2 or 3, and $g:\mathbb{R} \to \mathbb{R}$ is a continuous function verifying:

\begin{itemize}
	\item[$(g_1)$] $ \displaystyle{\lim_{s\to 0}\frac{g(s)}{s}=0}$ ;
	\item[$(g_2)$] There are $C>0$ and $p \in\left(2,\frac{2N}{N-2}\right)$  such that $|g(s)|\leq C (1+ |s|^{p-1})$ for all $s\in \mathbb R$ ;
	\item[$(g_3)$] There are $\theta>2$ and $M>0$ such that $sg(s)-\theta G(s)>0$ for all $|s|\geq M$ ;
	\item[$(g_4)$]  $\mathcal{G}(s)=sg(s)-2 G(s)>0$ for all $0<|s|\leq M$.
\end{itemize}

\vspace{0.5 cm}
Observe that $(f_1) -(f_3)$ and $(f_5)'$ imply $(g_1)-(g_4)$.  

\vskip .5 cm
In the sequel, we mention some facts that involve conditions $(g_1)-(g_4)$. It is easy to check that $(g_1)-(g_2)$ imply in the inequality below
\begin{equation}\label{eq00}
|G(s)|\leq c |s|+ \frac cp|s|^{p}, \quad \forall s\in \mathbb R.
\end{equation}

The condition $(g_3)$ is  the usual Ambrosetti-Rabinowitz at infinity, however we would like point out that we are not assuming this condition near the origin, which is usually assumed in whole $\mathbb{R}$, when we are considering elliptic problem in whole $\mathbb{R}^N$. Moreover, we do not assume any monotonicity condition on $\displaystyle{\frac {g(s)}s} $, which is in general assumed in a lot of papers without the Ambrosetti-Rabinowitz condition. Finally, we recall that $(g_3)$ is satisfied if 
\begin{equation}\label{eq000}
\lim_{|s|\to +\infty}\frac{g(s)}{|s|^{p-2}s}=1, \quad \mbox{for some} \quad p>2.
\end{equation}	

From  $(g_1)$, there exists $s_o>0$ such that 
\begin{equation}\label{eq01}
|g(s)s - \theta G(s)|\leq \frac {(\theta-2)V_o |s|^2}{4\theta}, \quad \mbox { for all }|s|\leq s_o.
\end{equation}

After these remarks, we are able to prove the existence of solution for $(AP)$.

\subsection{The variational approach} In this subsection, related to the function $V$, we assume only condition $(V_2)$. From now on, we set   
$$E=\left\{u\in D^{1,2}(\mathbb{R}^N);\,\,\int_{\mathbb{R}^N}(|\nabla u|^2+V(x)|u|^2)\,dx<+\infty\right\}$$
endowed with the norm
$$\|u\|=\left(\int_{\mathbb{R}^N}(|\nabla u|^2+V(x)|u|^2)\,dx\right)^{\frac{1}{2}}$$
and functional $J:E\to\R$ by
$$
J(u)=\frac{1}{2}\|u\|^2 \, dx-\int_{\mathbb{R}^N} G(u)\,dx.
$$
From $(g_1)-(g_3)$,  $J$ is well defined in $E$ and $J \in C^{1}(E, \mathbb{R})$ with 
$$
J'(u)v=\int_{\mathbb{R}^N}(\nabla u \nabla v + uv)\,dx - \int_{\mathbb{R}^N}g(u)v\,dx, \quad \forall u,v \in E.
$$
Moreover, it is very easy to check that $J$ also satisfies the mountain pass geometry. In what follows, we denote by $c>0$ the mountain pass level associated with $J$, that is,  
\begin{equation} \label{MPL}
c=\inf_{\gamma \in \Gamma}J(\gamma(t))>0 
\end{equation}
where
$$
\Gamma =\{\gamma\in C([0,1],E):\gamma(0)=0\mbox{ and } J(\gamma(1))< 0\}.
$$ 
Associated with $c$, we have a Cerami sequence $(v_n) \subset E$, that is, 
\begin{equation} \label{C1}
J(v_n) \to c \quad \mbox{and} \quad (1+\|v_n\|)\|J'(v_n)\| \to 0 \quad \mbox{as} \quad n \to +\infty. \,\,\, ( \mbox{See \cite[Theorem 5.46]{MMP}} )
\end{equation}

\begin{proposition} The sequence $(v_n)$ is bounded in $E$.
	
\end{proposition}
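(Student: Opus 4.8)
The plan is to prove boundedness directly from the Cerami property and the Ambrosetti--Rabinowitz quantity, treating the only genuinely $\mathbb{R}^N$-specific difficulty, namely that an intermediate level set could a priori carry infinite measure. First I would extract from \eqref{C1} the estimate $|J'(v_n)v_n|\le \|J'(v_n)\|\,\|v_n\|\le (1+\|v_n\|)\|J'(v_n)\|\to 0$, so that $J'(v_n)v_n=o(1)$. Combining this with $J(v_n)\to c$ through the algebraic identity
$$
J(v_n)-\tfrac{1}{2}J'(v_n)v_n=\tfrac{1}{2}\int_{\mathbb{R}^N}\mathcal{G}(v_n)\,dx,
$$
where $\mathcal{G}(s)=sg(s)-2G(s)$, I obtain $\int_{\mathbb{R}^N}\mathcal{G}(v_n)\,dx\to 2c$; in particular this integral stays bounded along the sequence.

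The second step controls the measure of the intermediate region $\{s_o\le |v_n|\le M\}$ (assuming, as we may, $s_o\le M$; otherwise the region is empty and the argument only simplifies). By $(g_4)$ one has $\mathcal{G}(s)>0$ on $0<|s|\le M$, while $(g_3)$ gives $\mathcal{G}(s)\ge 0$ for $|s|\ge M$, so $\mathcal{G}\ge 0$ on all of $\mathbb{R}$. Since $\mathcal{G}$ is continuous and strictly positive on the compact annulus $\{s_o\le |s|\le M\}$, there is $\delta_o>0$ with $\mathcal{G}(s)\ge\delta_o$ there, whence
$$
\delta_o\,\mathrm{meas}\{x:\,s_o\le |v_n(x)|\le M\}\le \int_{\mathbb{R}^N}\mathcal{G}(v_n)\,dx\le C.
$$
Thus the intermediate level set has uniformly bounded measure, which is exactly the ingredient that is free on a bounded domain but must be bought here from $(g_4)$ and the boundedness of $\int\mathcal{G}(v_n)$.

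The third step is the standard Ambrosetti--Rabinowitz estimate, now feasible. Writing
$$
c+o(1)=J(v_n)-\tfrac{1}{\theta}J'(v_n)v_n=\Bigl(\tfrac{1}{2}-\tfrac{1}{\theta}\Bigr)\|v_n\|^2+\tfrac{1}{\theta}\int_{\mathbb{R}^N}\bigl(g(v_n)v_n-\theta G(v_n)\bigr)\,dx,
$$
I split the last integral over $\{|v_n|\le s_o\}$, $\{s_o<|v_n|<M\}$, and $\{|v_n|\ge M\}$. On the last set $(g_3)$ makes the integrand nonnegative, so it may be discarded. On $\{|v_n|\le s_o\}$, estimate \eqref{eq01} together with $V_o\int|v_n|^2\le\|v_n\|^2$ bounds the contribution by $\tfrac{\theta-2}{4\theta^2}\|v_n\|^2$. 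On $\{s_o<|v_n|<M\}$, the continuous function $g(s)s-\theta G(s)$ is bounded on the annulus, and multiplying that bound by the measure estimate from the second step gives a contribution that is $O(1)$. Collecting everything yields
$$
\Bigl(\tfrac{\theta-2}{2\theta}-\tfrac{\theta-2}{4\theta^2}\Bigr)\|v_n\|^2\le C+o(1),
$$
and since $\tfrac{1}{2\theta}>\tfrac{1}{4\theta^2}$ the coefficient on the left is strictly positive, forcing $\|v_n\|$ to be bounded.

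I expect the main obstacle to be precisely the non-compactness: unlike on a bounded $\Omega$, the intermediate region cannot simply be dominated by $\mathrm{meas}(\Omega)$, and the naive bound $\mathrm{meas}\{s_o\le|v_n|\le M\}\le \|v_n\|_2^2/s_o^2$ only returns a term of order $\|v_n\|^2$ that cannot be absorbed. The device that breaks this is Step~2, where $(g_4)$ supplies a positive lower bound for $\mathcal{G}$ on the annulus and the Cerami information bounds $\int\mathcal{G}(v_n)$. A secondary delicate point is the assertion $\mathcal{G}\ge 0$ for $|s|\ge M$: it follows from $(g_3)$ once one knows $G(s)\ge 0$ for large $|s|$, which is part of the Ambrosetti--Rabinowitz structure at infinity present in our setting, and I would verify this explicitly. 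An alternative route is the rescaling $w_n=v_n/\|v_n\|$ with a Fatou argument on $\int G(v_n)/\|v_n\|^2\to\tfrac12$, but its vanishing case is more delicate under $(V_2)$ alone, so I prefer the direct estimate above.
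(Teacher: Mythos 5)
Your argument is correct and is essentially the paper's own proof: the same decomposition of $\mathbb{R}^N$ into $\{|v_n|<s_o\}$, $\{s_o\le |v_n|\le M\}$ and $\{|v_n|>M\}$, the same bound on the measure of the middle set obtained from $(g_4)$ together with the Cerami information, and the same $J(v_n)-\frac{1}{\theta}J'(v_n)v_n$ estimate using (\ref{eq01}) on the small set and $(g_3)$ on the large one. The only sub-step you left open, namely $G(s)\ge 0$ (hence $\mathcal{G}(s)\ge 0$) for $|s|\ge M$, is not an assumption of the problem but does close in two lines: by $(g_1)$ and $(g_4)$ the quotient $G(s)/s^2$ is increasing on $(0,M]$ with limit $0$ at the origin, so $G(M)>0$, and then $(g_3)$ makes $G(s)/s^{\theta}$ increasing on $[M,+\infty)$, giving $G(s)\ge G(M)(s/M)^{\theta}>0$ (and similarly for $s\le -M$); note that the paper itself uses this same fact implicitly when it discards the integral of $\mathcal{G}(v_n)$ over the complement of $\mathcal{A}_n$.
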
	
\begin{proof} As $(v_n)$ is a Cerami sequence,  we have  
$$
J(v_n)=c+o_n(1) \quad \mbox{and} \quad J'(v_n)v_n=o_n(1). 
$$
For each $n$, we set 
\[
\mathcal{A}_n=\{x\in \mathbb R^N: s_o\leq|v_n(x)|\leq M\},
\]
where $s_o$ was fixed in (\ref{eq01}). From $(g_4)$, 
\[
\sigma_o|\mathcal{A}_n|\leq \int_{\mathcal{A}_n}\mathcal{G}(v_n)\,dx\leq J(v_n)-\frac 12 J'(v_n)v_n=c+o_n(1),
\]
where $\sigma_o=\inf\{\mathcal{G}(s): |s|\in[s_o,M]\}>0$ and $|\mathcal{A}_n|$ denotes the Lebesgue's measure of $\mathcal{A}_n$. This inequality implies that $(|\mathcal{A}_n|)$ is a bounded sequence. In the sequel, we consider the sets
$$
\mathcal{C}_n=\{x\in \mathbb R^N: |v_n(x)|< s_o\}
$$ 
and 
$$
\mathcal{B}_n=\{x\in \mathbb R^N: |v_n(x)|> M\}.
$$
From (\ref{eq01}),
\begin{eqnarray*}
	\int_{\mathcal{C}_n}[v_ng(v_n)-\theta G(v_n)]dx\geq -\frac {(\theta-2)V_o }{4}\int_{\mathcal{C}_n} |v_n|^2dx\geq-\frac {(\theta-2) }{4}\|v_n\|^2, \quad \forall n \in \mathbb{N}
\end{eqnarray*}
and by $(g_3)$, 
\[
\int_{\mathcal{B}_n}[v_ng(v_n)-\theta G(v_n)]dx\geq 0, \quad \forall n \in \mathbb{N}.
\]
On the other hand, $(g_3)$ gives
\begin{eqnarray*}
	\int_{\mathcal{A}_n}[v_ng(v_n)-\theta G(v_n)]dx\geq -|\mathcal{A}_n| M_o, \quad \forall n \in \mathbb{N},
\end{eqnarray*}
for some $M_0>0$. The above analysis ensure that  
$$
\begin{array}{l}
	c+o_n(1)= \displaystyle J(v_n)-\frac 1 \theta J'(v_n)v_n=\frac {(\theta-2) }{2\theta}\|v_n\|^2 +\frac 1\theta \int_{\mathbb R^N}[v_ng(v_n)-\theta G(v_n)]dx \\
	\mbox{} \\
 \;\;\;\;\;\;\;\;\;\;\;\;\;\;\; \geq \displaystyle \frac {(\theta-2) }{4\theta}\|v_n\|^2-M_o|\mathcal{A}_n|, \quad \forall n \in \mathbb{N}.
\end{array}
$$
Since $(|\mathcal{A}_n|)$ is a bounded sequence, it follows that $(v_n)$ is bounded in $E$.

\end{proof}

\subsection{Existence of solution for $(AP)$: The periodic case}

In this section we assume that $V$ verifies $(V_1)-(V_2)$. By Subsection 2.1, we know that there is a bounded Cerami sequence $(v_n)$ associated with the mountain pass level $c$, that is,
$$
J(v_n) \to c \quad \mbox{and} \quad (1+\|v_n\|)\|J'(v_n)\| \to 0 \quad \mbox{as} \quad n \to +\infty. 
$$
From boundedness of $(v_n)$, we deduce that $(v_n)$ is a $(PS)_c$ sequence, that is 
$$
J(v_n) \to c \quad \mbox{and} \quad J'(v_n) \to 0 \quad \mbox{as} \quad n \to +\infty. 
$$
Moreover, we can assume that for some subsequence, there is $u \in E$ such that
$$
v_n \rightharpoonup v \quad \mbox{in} \quad E,
$$
$$
v_n \to v \quad \mbox{in} \quad L^{t}(B_R(0)), \quad  \forall t \in [1,2^{*}) \quad \mbox{and} \quad \forall R>0 ,
$$
and
$$
v_n(x) \to v(x) \quad \mbox{a.e in} \quad \mathbb{R}^N.
$$

Since $J$ is invariant by translation,  by Lions \cite{Lions}, we can assume that $v \not= 0$. Hence, $v$ is a nontrivial critical point for $J$, and so, $v$ is a nontrivial solution for $(AP)$.  The reader can see more details of how we can use \cite{Lions} in \cite{ACM}, \cite{ZElati}, \cite{ZR} and \cite{Szulkin1}.

\subsection{The existence of solution for $(AP)$ : The Coercive case}

In this case, it is well known that the following compact embedding holds
\begin{equation} \label{compacidade}
E \hookrightarrow L^{t}(\mathbb{R}^N), \quad \forall t \in (2,2^*) \,\, (\,\, \mbox{See \cite{Costa2}} \,\, ).
\end{equation}

This compact embedding together with the boundedness of $(PS)$ sequences permit to prove that $J$ verifies the well known (PS) condition, and so, the mountain pass level $c$ is a positive critical value for $(AP)$. This prove that $(AP)$ has a nontrivial solution. 

We would like to point out that the same conclusion holds if  $(V_3)$ is replaced by  
$$
\mbox{For all} \quad K>0 \quad \mbox{we have that} \quad |\left\{x \in \mathbb{R}^N\,:\, V(x) \leq K \right\}|<\infty, \,\, (\,\, \mbox{See \cite{Bartsch-Wang1} } \,\, ) 	\leqno{(V_4)} 
$$
where $|A|$ denotes the Lebesgue's measure of a mensurable set $A \subset \mathbb{R}^N$. The last condition also implies in the compact embedding (\ref{compacidade}). 

Finally, we would like point out that if $V$ is radially symmetric, the compactness (\ref{compacidade}) also holds in $H_{rad}^{1}(\mathbb{R}^N)$, for more details see Willem \cite[Corollary 1.26]{Willem}. The existence of solution follows as in \cite[Section 1.6]{Willem}, where the above compact embedding (\ref{compacidade}) and the Principle of symmetric criticality due to Palais apply an important role in the arguments.

\subsection{Existence of solution for (AP): $V$ is a Barstch \& Wang type potential} 
The approach explored in the previous subsection can be also used to study the existence of solution when the potential $V$ is of the form
$$
V(x)=1+\lambda W(x), \quad \forall x \in \mathbb{R}^N,  \,\, (\,\, \mbox{See \cite{BW2} } \,\, ) 
$$ 
where $\lambda >0$ and $W:\mathbb{R}^N \to \mathbb{R}$ is a nonnegative continuous function that satisfies 
$$
\mbox{There is} \quad K>0 \quad \mbox{such that} \quad |\left\{x \in \mathbb{R}^N\,:\, V(x) \leq K \right\}|<+\infty. 	\leqno{(W_1)}
$$

Arguing as \cite{BW2}, there is $\alpha>0$, which is independent of $\lambda$, such that 
\begin{equation} \label{EBW1}
\int_{\mathbb{R}^N}|v_n|^{p}\,dx \geq \alpha, \quad \forall n \in \mathbb{N},  \,\, (\,\, \mbox{See \cite[Lemma 2.4]{BW2} } \,\, ) .
\end{equation}
Moreover, it is possible to prove that there are $R>0$ independent of $\lambda>0$, and $\lambda^*>0$ such that 
\begin{equation} \label{EBW2}
\limsup_{n \to +\infty}\int_{B^{c}_R(0)}|v_n|^{p}\,dx \leq \frac{\alpha}{2}, \quad \forall n \in \mathbb{N} \quad \mbox{and} \quad \lambda \geq \lambda^*. \,\, (\,\, \mbox{See \cite[Lemma 2.5]{BW2} } \,\, ) 
\end{equation}
From (\ref{EBW1})-(\ref{EBW2}),
$$
\int_{B_R(0)}|v|^{p}\,dx \geq \frac{\alpha}{2}>0, \quad \forall \lambda \geq \lambda^*,
$$
showing that $v$ is nontrivial. Hence $(AP)$ has a nontrivial solution for $\lambda$ large enough.

\vskip .5 cm
Before concluding this section, the reader is invited to see that the results showed in this section prove the Theorem \ref{T0}.

\section{Proof of Theorem \ref{T1}}

In this section we deal with the proof of Theorem \ref{T1}, and the results obtained in Section 2 will be crucial in our approach. 

In what follows, we consider the auxiliary problem
$$
\left\{
\begin{array}{l}
-\Delta u+ V(x)u= f(u) +\varepsilon|u|^{q-2}u \,\,\,\,\mbox{in }\,\,\,\,\mathbb{R}^N, \\
u \in E.
\end{array}
\right.
\leqno{(AP)_\varepsilon}
$$
where $\displaystyle{q=\frac {2N+4}N}$  and $\varepsilon >0$.

It is easy to check that $g_\varepsilon(s)=f(s) +\varepsilon|s|^{q-2}u$ satisfies $(g_1)-(g_4)$ for $\theta=q$. Hence, from the previous sections, for fixed $\varepsilon>0$, there is a nontrivial solution $u_\varepsilon$ of $(AP)_\varepsilon$ such that 
$$
J_\varepsilon(u_\varepsilon)=c_\varepsilon \quad \mbox{and} \quad J'_\varepsilon(u_\varepsilon)=0 
$$
where 
$$
J_\varepsilon(u)=\frac{1}{2}\|u\|^2 \, dx-\int_{\mathbb{R}^N} G_\varepsilon(u)\,dx, \quad \forall u \in E, 
$$
and $c_\varepsilon$ is the mountain pass level associated with $J_\varepsilon$, that is,
$$
0<c_\varepsilon=\inf_{\gamma \in \Gamma}J_\varepsilon(\gamma(t)).
$$
Since $J_\varepsilon(v)\leq J_0(v)$ for all $v\in E$, we have $c_\varepsilon\leq c_0$ for all $ \varepsilon >0$. Furthermore, it is possible to prove that there is $\alpha>0$, which is independent of $\varepsilon \in [0,1]$, such that  
\begin{equation} \label{limitab}
c_\varepsilon \geq \alpha, \quad \forall \varepsilon \in [0,1].
\end{equation}
As an immediate consequence of the above inequality, we have 
\begin{equation} \label{limitab*}
\|u_\varepsilon \| \not\to 0 \quad \mbox{as} \quad \varepsilon \to 0.   
\end{equation}

The next result establishes an important estimate for $\{u_\varepsilon\}$ in $E$, which is a key point in our approach.   

\begin{proposition} \label{LIMITACAO} There is $\varepsilon_0>0$ such that $\{u_\varepsilon\}$ is bounded in $E$  for all $\varepsilon \in [0,\varepsilon_0]$.
	
\end{proposition}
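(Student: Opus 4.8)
The plan is to use that $u_\varepsilon$ is an \emph{exact} critical point, so that both $J_\varepsilon'(u_\varepsilon)=0$ and $J_\varepsilon(u_\varepsilon)=c_\varepsilon\le c_0$ are available. First I would record the two identities obtained by testing with $u_\varepsilon$ and by forming $J_\varepsilon(u_\varepsilon)-\frac12 J_\varepsilon'(u_\varepsilon)u_\varepsilon$:
\begin{equation}\label{plan1}
\|u_\varepsilon\|^2=\int_{\mathbb R^N}f(u_\varepsilon)u_\varepsilon\,dx+\varepsilon\int_{\mathbb R^N}|u_\varepsilon|^{q}\,dx,
\end{equation}
\begin{equation}\label{plan2}
c_\varepsilon=\frac12\int_{\mathbb R^N}\mathcal F(u_\varepsilon)\,dx+\varepsilon\Big(\frac12-\frac1q\Big)\int_{\mathbb R^N}|u_\varepsilon|^{q}\,dx.
\end{equation}
Since the last term in \eqref{plan2} is nonnegative and $c_\varepsilon\le c_0$, this yields at once the uniform bound $\int_{\mathbb R^N}\mathcal F(u_\varepsilon)\,dx\le 2c_0$, and, bounding $\mathcal F$ from below through $(f_8)$, also a uniform bound on $\varepsilon\int_{\mathbb R^N}|u_\varepsilon|^{q}\,dx$. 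This is the place where $(f_8)$ replaces the Ambrosetti--Rabinowitz control.

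Next I would reduce everything to the $L^2$--norm. By $(f_1)$ and $(f_7)$, for every $\delta>0$ there is $C_\delta>0$ with $|f(s)s|\le\delta s^2+C_\delta|s|^{p}$; using $(V_2)$ to absorb the quadratic term into $\|u_\varepsilon\|^2$, \eqref{plan1} gives
\begin{equation}\label{plan3}
\tfrac12\|u_\varepsilon\|^2\le C\int_{\mathbb R^N}|u_\varepsilon|^{p}\,dx+\varepsilon\int_{\mathbb R^N}|u_\varepsilon|^{q}\,dx.
\end{equation}
Here the choice $q=\frac{2N+4}N$ and the restriction $p\in\big(2,\frac{2N+4}N\big)$ become decisive through the Gagliardo--Nirenberg inequality: since $q=2+\frac4N$ one has $\int|u|^{q}\le C\|\nabla u\|_2^{2}\|u\|_2^{4/N}$, while for $p<q$ the gradient enters with a \emph{subquadratic} power, $\int|u|^{p}\le C\|\nabla u\|_2^{N(p-2)/2}\|u\|_2^{\,p-N(p-2)/2}$ with $N(p-2)/2<2$. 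Consequently, \emph{if} one can show that $\|u_\varepsilon\|_2\le K$ uniformly in $\varepsilon$, then, bounding $\|\nabla u_\varepsilon\|_2\le\|u_\varepsilon\|$, the term $\varepsilon\int|u_\varepsilon|^{q}\le C\varepsilon K^{4/N}\|u_\varepsilon\|^2$ is absorbed into the left--hand side of \eqref{plan3} for all $\varepsilon\le\varepsilon_0$ small, and what remains is $\|u_\varepsilon\|^2\le C'\|u_\varepsilon\|^{N(p-2)/2}$ with exponent strictly below $2$, forcing $\|u_\varepsilon\|$ to be bounded.

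The main obstacle is therefore precisely the uniform bound $\|u_\varepsilon\|_2\le K$. From $\int\mathcal F(u_\varepsilon)\,dx\le 2c_0$ and $(f_8)$ — choosing $s_o$ so small that $|\mathcal F(s)|\le\eta s^2$ for $|s|\le s_o$ while $\mathcal F(s)\ge\sigma_o s^2$ for $|s|\ge s_o$ — one controls only the high--amplitude mass,
$$
\sigma_o\int_{\{|u_\varepsilon|\ge s_o\}}u_\varepsilon^2\,dx\le 2c_0+\eta\int_{\{|u_\varepsilon|<s_o\}}u_\varepsilon^2\,dx,
$$
so any blow--up of $\|u_\varepsilon\|_2$ would have to come from the low--amplitude region $\{|u_\varepsilon|<s_o\}$. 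To rule this out I would argue by contradiction: if $\|u_{\varepsilon_n}\|\to\infty$ with $\varepsilon_n\to0$, then by the previous paragraph $\|u_{\varepsilon_n}\|_2\to\infty$ as well; setting $w_n=u_{\varepsilon_n}/\|u_{\varepsilon_n}\|$ (bounded in $H^1$) and applying the Lions vanishing/non--vanishing dichotomy, the non--vanishing case — after an integer translation in the periodic class, or directly using the compact embedding \eqref{compacidade} in the coercive and Bartsch--Wang classes — produces $w_n\rightharpoonup w\neq0$; on $\{w\neq0\}$ one has $|u_{\varepsilon_n}|\to+\infty$, hence $\mathcal F(u_{\varepsilon_n})\ge\sigma_o u_{\varepsilon_n}^2\to+\infty$ pointwise, and a Fatou argument drives $\int\mathcal F(u_{\varepsilon_n})\,dx\to+\infty$, contradicting $\int\mathcal F(u_{\varepsilon_n})\,dx\le 2c_0$. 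The vanishing case I would exclude using the uniform lower bound $c_\varepsilon\ge\alpha>0$ from \eqref{limitab}, since vanishing makes $w_n\to0$ in $L^p$ and $L^q$ and is incompatible with a strictly positive level.

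The point I expect to be genuinely hard is the reconciliation, in that non--vanishing step, of the coercive gain from $(f_8)$ on $\{|u_\varepsilon|\ge s_o\}$ with the a priori only $O(\|u_\varepsilon\|_2^2)$ contribution of $\mathcal F$ on $\{|u_\varepsilon|<s_o\}$: passing to the limit along $w_n$ must be organized so that the former strictly dominates the latter, which requires a careful, $s_o$--dependent bookkeeping (equivalently, a careful use of the weak profile $w$). This is the only step where the full force of $(f_8)$, rather than mere superquadraticity of $F$, is used, and it is what ultimately pins down $\varepsilon_0$.
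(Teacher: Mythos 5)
Your proposal shares the paper's starting points (the identities $J_\varepsilon'(u_\varepsilon)u_\varepsilon=0$ and $2c_\varepsilon=\int_{\R^N}\mathcal{G}_\varepsilon(u_\varepsilon)\,dx$, the role of $(f_8)$, and the exponent arithmetic around $q=\tfrac{2N+4}{N}$), but its architecture is different, and it has a genuine gap at the vanishing step of your concentration--compactness argument. If $\|u_{\varepsilon_n}\|\to\infty$ and the normalized sequence $w_n=u_{\varepsilon_n}/\|u_{\varepsilon_n}\|$ vanishes, you claim this is ``incompatible with a strictly positive level'' $c_{\varepsilon_n}\ge\alpha$. That does not follow: vanishing gives $\|w_n\|_{L^p}\to 0$ for the \emph{normalized} functions only, and says nothing about $u_{\varepsilon_n}=\|u_{\varepsilon_n}\|w_n$; the quantities that actually appear when you test the equation, such as $\|u_{\varepsilon_n}\|^{p-2}\|w_n\|_{L^p}^{p}$, are of the indeterminate form $\infty\cdot 0$. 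The standard way to turn vanishing into a contradiction (Jeanjean \cite{Jeanjean}, Liu \cite{Liu1}) is to observe $J_{\varepsilon_n}(Rw_n)\to R^2/2$ for each fixed $R$ and then compare $J_{\varepsilon_n}(tu_{\varepsilon_n})$, $t\in[0,1]$, with the level $c_{\varepsilon_n}$ through a maximum point of $t\mapsto J_{\varepsilon_n}(tu_{\varepsilon_n})$; but that comparison uses precisely a quasi-monotonicity condition on $\mathcal{F}$ of the type (\ref{ZE}), which this paper is explicitly written to avoid. So the step you dismiss in one sentence is exactly the known obstruction, and it is present in all three classes of potentials (with the compact embedding (\ref{compacidade}) the limit of $w_n$ may still be zero, which is again the vanishing case). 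Incidentally, the step you flag as the hard one --- the non-vanishing bookkeeping --- is actually easy here: since $(f_8)$ is assumed for \emph{every} $s_o>0$, one has $\mathcal{F}\ge 0$ on all of $\R$, so Fatou's lemma applies with no low-amplitude correction.

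The paper's proof avoids normalization, dichotomy and contradiction altogether; it is a direct a priori estimate, and the trick you missed is to interpolate only on the high-amplitude set. From $\mathcal{G}_\varepsilon\ge \mathcal{F}$ and the identity $\int_{\R^N}\mathcal{G}_\varepsilon(u_\varepsilon)\,dx=2c_\varepsilon\le 2c_0$, condition $(f_8)$ gives the uniform bound $\int_{\{|u_\varepsilon|\ge s_o\}}|u_\varepsilon|^2\,dx\le 2c_0/\sigma$ on that set alone, and this suffices: writing $|u_\varepsilon|^q=|u_\varepsilon|^2|u_\varepsilon|^{4/N}$ and applying H\"older there with exponents $\tfrac{N}{N-2}$ and $\tfrac N2$ yields
$$
\int_{\{|u_\varepsilon|\ge s_o\}}|u_\varepsilon|^{q}\,dx\le \|u_\varepsilon\|_{2^*}^{2}\Big(\int_{\{|u_\varepsilon|\ge s_o\}}|u_\varepsilon|^{2}\,dx\Big)^{2/N}\le S^{-1}\Big(\tfrac{2c_0}{\sigma}\Big)^{2/N}\|u_\varepsilon\|^{2},
$$
which is absorbed into $\tfrac12\|u_\varepsilon\|^2$ once $\varepsilon\le\varepsilon_0$ with $\varepsilon_0$ of order $S(\sigma/c_0)^{2/N}$; the low-amplitude part of $G_\varepsilon$ is absorbed using $(f_1)$ and $(V_2)$ as in (\ref{eq02}), and the remaining quadratic part on the high-amplitude set is controlled by the same $L^2$ bound. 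This closes the estimate uniformly in $\varepsilon\in(0,\varepsilon_0]$, identically for all three classes of $V$. If you want to salvage your route, replace your global $L^2$ bound by this high-set bound: your own interpolation then gives $\int_{\R^N}|u_\varepsilon|^p\,dx\le s_o^{p-2}V_o^{-1}\|u_\varepsilon\|^2+C\|u_\varepsilon\|^{N(p-2)/2}$ with $N(p-2)/2<2$, and since $(f_8)$ allows $s_o$ to be taken arbitrarily small (at the price of $\sigma=\sigma(s_o)$), the first term is absorbed and boundedness follows directly, with no compactness argument at all.
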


\begin{proof}
For each $\varepsilon >0$, we have $\mathcal{G}_\varepsilon \geq \mathcal{F}$. Thus, by $(f_8)$,
$$
\sigma\int_{\{|u_\varepsilon|\geq s_o\}}|u_\varepsilon|^2dx\leq \int_{\mathbb R^N}\mathcal{G}_\varepsilon(u_\varepsilon)\,dx= 2J_\epsilon(u_\varepsilon)- J_\varepsilon'(u_\varepsilon)u_\varepsilon= 2c_\varepsilon \leq 2c_0.
$$
Arguing as (\ref{eq01}), there exists $s_o>0$, which is independent of $\varepsilon >0$ small, such that 
\begin{equation}\label{eq02}
G_\varepsilon(s)\leq \frac {V_o |s|^2}{4}\mbox { for all }|s|\leq s_o.
\end{equation}
On the other hand, from $(f_7)$,  for each $\varepsilon>0$, there is $C_\varepsilon >0$ verifying  
\begin{equation}\label{eq03}
G_\varepsilon(s)\leq \varepsilon |s|^q+2C_\varepsilon |s|^2\mbox { for all }|s|\geq s_o.
\end{equation}
So, setting 
$$
\mathcal{C}_\varepsilon=\{x\in \mathbb R^N: |u_\varepsilon(x)|< s_o\},
$$ 
and using  (\ref{eq02}) and (\ref{eq03}), we get
$$
\begin{array}{l}
	\frac 12\|u_\varepsilon\|^2=c_\varepsilon+ \displaystyle \int_{\mathbb R^N}G_\varepsilon (u_\varepsilon)dx \leq c_\varepsilon+\int_{\mathcal{C}_\varepsilon}G_\varepsilon(u_\varepsilon)dx +\varepsilon\int_{\{|u_\varepsilon|\geq s_o\}}|u_\varepsilon|^qdx+ 2C_\varepsilon\int_{\{|u_\varepsilon|\geq s_o\}}|u_\varepsilon|^2dx\\ 
	\mbox{}\\
\hspace{5 cm}	\leq c_0+\frac{C_\varepsilon c_0}{\sigma}+\frac 14 \displaystyle \int_{\mathcal{C}_\varepsilon}V(x)|u_\varepsilon|^2dx +\varepsilon\int_{\{|u_\varepsilon|\geq s_o\}}|u_\varepsilon|^qdx,
\end{array}
$$
that is,
\begin{eqnarray*}
	\frac 14\|u_\varepsilon\|^2\leq \frac{(C_\varepsilon+\sigma)c_0}{\sigma}+\varepsilon \int_{\{|u_\varepsilon|\geq s_o\}}|u_\varepsilon|^qdx.
\end{eqnarray*}
Recalling that
\begin{eqnarray*}
	\int_{\{|u_\varepsilon|\geq s_o\}}|u_\varepsilon|^qdx\leq \left( \int_{\{|u_\varepsilon|\geq s_o\}}|u_\varepsilon|^{2^*}dx\right)^\frac{N-2}{N}\left( \int_{\{|u_\varepsilon|\geq s_o\}}|u_\varepsilon|^{\frac{N(q-2)}{2}}dx\right)^\frac{2}{N}\\\leq \left( \frac{c_0}{\sigma}\right)^\frac{2}{N}\|u_\varepsilon\|_{2^*}^2\leq \left( \frac{c_0}{\sigma}\right)^\frac{2}{N}S^{-1}\|u_\varepsilon\|^2,
\end{eqnarray*}
and fixing $\varepsilon_0=\frac{S}{8}(\frac{\sigma}{c_0})^{\frac{2}{N}}$, we derive that 
\[
\frac 18\|u_\varepsilon\|^2\leq \frac{(C_\varepsilon +\sigma)c_0}{\sigma}, \quad \forall \varepsilon \in (0,\varepsilon_0].
\]
This proves the boundedness of $\{u_\varepsilon\}$ for $\varepsilon \in (0,\varepsilon_0]$ in $E$. 

\end{proof}

In the sequel, for each $n \in \mathbb{N}$, we denote by $u_n$ the solution of $(AP)_{\frac{1}{n}}$, that is, 
$$
-\Delta u_n+ V(x)u_n= f(u_n) +\frac{1}{n}|u_n|^{q-2}u_n \,\,\,\,\mbox{in }\,\,\,\,\mathbb{R}^N. \leqno{(AP)_{\frac{1}{n}}}
$$

By Proposition \ref{LIMITACAO}, the sequence $(u_n)$ is bounded in $E$, hence for some subsequence, there is $u \in E$ such that
$$
u_n \rightharpoonup u \quad \mbox{in} \quad E,
$$
$$
u_n \to u \quad \mbox{in} \quad L^{t}(B_R(0)), \quad  \forall t \in [1,2^{*}) \quad \mbox{and} \quad \forall R>0, 
$$
and
$$
u_n(x) \to u(x) \quad \mbox{a.e in} \quad \mathbb{R}^N.
$$

From this, for each $\phi \in E$, 
$$
\int_{\mathbb{R}^N}\nabla u_n \nabla v \,dx + \int_{\mathbb{R}^N}V(x)u_n\phi\, dx= \int_{\mathbb{R}^N}f(u_n)\phi \, dx+ \frac{1}{n}\int_{\mathbb{R}^N}|u_n|^{q-2}u_n\phi\,dx, \quad \forall n \in \mathbb{N}. 
$$
Then taking the limit of $n \to +\infty$, we find
$$
\int_{\mathbb{R}^N}\nabla u \nabla v \,dx + \int_{\mathbb{R}^N}V(x)u \phi\, dx= \int_{\mathbb{R}^N}f(u)\phi\,dx, \quad \forall  \phi \in E,
$$
showing that $u$ is a solution of $(P_1)$. Our goal is proving that $u$ is a nontrivial solution.  Have this in mind, firstly we prove the result below

\begin{lemma} \label{naopode} $ u_n \not\to 0$ in $L^{p}(\mathbb{R}^N)$, where $p$ was given in $(f_7)$. 
	
\end{lemma}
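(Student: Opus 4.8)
The plan is to argue by contradiction. I would assume that $u_n \to 0$ in $L^{p}(\mathbb{R}^N)$ and derive that $\|u_n\| \to 0$, which contradicts \eqref{limitab*} (equivalently, the uniform lower bound $c_\varepsilon \geq \alpha$ recorded in \eqref{limitab}). The driving identity is the Nehari-type relation obtained from $J'_{1/n}(u_n) = 0$ tested against $u_n$, namely
$$
\|u_n\|^2 = \int_{\mathbb{R}^N} f(u_n)u_n\,dx + \frac{1}{n}\int_{\mathbb{R}^N}|u_n|^{q}\,dx .
$$
The strategy is to show that, under the contradiction hypothesis, the whole right-hand side collapses into a small multiple of $\|u_n\|^2$ plus terms tending to $0$, which then forces $\|u_n\|\to 0$.

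First I would dispose of the perturbation term. Since $q=\tfrac{2N+4}{N}\in(2,2^*)$ and $(u_n)$ is bounded in $E$ by Proposition \ref{LIMITACAO}, the continuous embedding $E\hookrightarrow L^{q}(\mathbb{R}^N)$ gives a uniform bound on $\int_{\mathbb{R}^N}|u_n|^{q}\,dx$, so that $\tfrac{1}{n}\int_{\mathbb{R}^N}|u_n|^{q}\,dx=o_n(1)$. Next, combining $(f_1)$ and $(f_7)$ in the standard way, for each $\delta>0$ there is $C_\delta>0$ with $|f(s)s|\leq \delta s^2 + C_\delta|s|^{p}$ for all $s\in\mathbb{R}$; integrating and using $\|u_n\|^2\geq V_o\int_{\mathbb{R}^N}|u_n|^2\,dx$, which holds because $\inf_{\mathbb{R}^N}V=V_o>0$, yields
$$
\int_{\mathbb{R}^N} f(u_n)u_n\,dx \leq \frac{\delta}{V_o}\|u_n\|^2 + C_\delta\int_{\mathbb{R}^N}|u_n|^{p}\,dx .
$$

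Inserting both estimates into the Nehari identity gives $\|u_n\|^2 \leq \tfrac{\delta}{V_o}\|u_n\|^2 + C_\delta\int_{\mathbb{R}^N}|u_n|^{p}\,dx + o_n(1)$. Choosing $\delta=V_o/2$ and absorbing the first term on the left produces $\tfrac{1}{2}\|u_n\|^2 \leq C_\delta\int_{\mathbb{R}^N}|u_n|^{p}\,dx + o_n(1)$. Under the contradiction hypothesis $\int_{\mathbb{R}^N}|u_n|^{p}\,dx\to 0$, hence $\|u_n\|\to 0$, the sought contradiction. I expect the only delicate point to be the control of the perturbation term $\tfrac{1}{n}\int_{\mathbb{R}^N}|u_n|^{q}\,dx$: it is crucial that $q$ is \emph{strictly} subcritical and that the $E$-bound from Proposition \ref{LIMITACAO} is uniform in $n$, so that this term is genuinely $o_n(1)$ rather than merely bounded. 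Everything else is a routine absorption argument, so this is where I would be most careful.
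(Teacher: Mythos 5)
Your proof is correct and takes essentially the same route as the paper: the same Nehari identity from $J'_{1/n}(u_n)u_n=0$, the same $(f_1)$--$(f_7)$ estimate $|f(s)s|\leq \delta s^2+C_\delta|s|^{p}$ absorbed into the norm via $V\geq V_o$, the same use of the $E$-boundedness of $(u_n)$ (hence $L^{q}$-boundedness) to make $\tfrac{1}{n}\int_{\mathbb{R}^N}|u_n|^{q}\,dx=o_n(1)$, and the same final contradiction with (\ref{limitab*}). The only cosmetic difference is that you carry a general $\delta$ before fixing $\delta=V_o/2$, whereas the paper fixes that constant at the outset.
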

\begin{proof}  Since $u_n$ is a solution of $(AP)_{\frac{1}{n}}$, it follows that
$$
\|u_n\|^{2}=\int_{\mathbb{R}^N}f(u_n)u_n\,dx+\frac{1}{n}\int_{\mathbb{R}^N}|u_n|^{q}\,dx, \quad \forall n \in \mathbb{N}.
$$
By $(f_1)$ and $(f_7)$, 
$$
|f(t)t| \leq \frac{V_o}{2}|t|^{2}+C|t|^{p}, \quad \forall t \in \mathbb{R},
$$
for some positive constant $C$. Consequently, 
$$
\|u_n\|^{2} \leq C \int_{\mathbb{R}^N}|u_n|^{p}\,dx+\frac{1}{n}\int_{\mathbb{R}^N}|u_n|^{q}\,dx, \quad \forall n \in \mathbb{N},
$$
for some positive constant $C$. Using the fact that $(u_n)$ is bounded in $E$, we have that $(u_n)$ is also bounded in $L^{q}(\mathbb{R}^N)$. Thus, supposing by contradiction that $u_n  
\to 0$ in $L^{p}(\mathbb{R}^N)$, we obtain
$$
u_n \to 0 \quad \mbox{in} \quad E,
$$
which contradicts (\ref{limitab*}).
\end{proof}

\noindent {\bf Conclusion of the proof of Theorem \ref{T1}} \\
Now, we can argue as in Subsections 2.2, 2.3 and 2.4 to deduce that $u \not= 0$. For example, in the periodic case, the Lions' result together with Lemma \ref{naopode} ensures that we can assume that $u \not= 0$. For the others cases, we argue exactly as in Section 2.   

\section{Proof of Theorem \ref{T2}}

In what follows, we consider the function $h_\Upsilon:\mathbb{R} \to \mathbb{R}$ given by
$$
h_\Upsilon(t)=
\left\{
\begin{array}{l}
h(t), \quad t \in [0,\Upsilon], \\
\mbox{}\\
\frac{h(\Upsilon)}{f(\Upsilon)}f(t), \quad t \geq \Upsilon,
\end{array}
\right.
$$ 
and $f_\Upsilon:\mathbb{R} \to \mathbb{R}$ by
$$
f_\Upsilon(t)=
\left\{
\begin{array}{l}
0, \quad t \in (-\infty,0], \\
f(t)+\beta h_\Upsilon(t), \quad t \geq 0.
\end{array}
\right.
$$ 
Using the above notations, our intention is proving the existence of a nontrivial solution $u_{\beta}$ with $\|u_{\beta}\|_{\infty}<\Upsilon$ for the following auxiliary problem  
$$
\left\{
\begin{array}{l}
-\Delta{u}+V(x)u=f_\Upsilon(u), \quad \mbox{in} \quad \mathbb{R}^N, \\
u \in E.
\end{array}
\right.
\leqno{(AP)_{\beta,\Upsilon}}
$$
Hereafter, we fix $\beta^*=\frac{f(\Upsilon)}{h(\Upsilon)}$ and $\beta \in [0, \beta^*]$. A simple computation shows that $f_\Upsilon$ satisfies $(f_1),(f_7)-(f_8)$, more precisely,

\begin{itemize}
	\item[$(i)$] $ \displaystyle{\lim_{s\to 0}\frac{f_\Upsilon(s)}{s}=0}$ ;
	\item[$(ii)$] There are $C>0$ and $p\in\left(2,\frac{2N+4}{N}\right)$  such that $|f_\Upsilon(s)|\leq C (1+ |s|^{p-1})$ for all $s\in \mathbb R$;
	\item[$(iii)$] For each $s_o>0$, there is $\sigma>0$ such that $$\mathcal{F}_\Upsilon(s)=sf_\Upsilon(s)-2 F_\Upsilon(s)\geq \sigma s^2,\mbox { for all  } s\geq s_o,$$
	where $F_\Upsilon(s)=\int_{0}^{s}f_\Upsilon(t)\,dt$.
\end{itemize}
Indeed, we will check only $(ii)-(iii)$, because $(i)$ is immediate. By $(h_4)$, 
$$
h_\Upsilon(s) \leq \frac{h(\Upsilon)}{f(\Upsilon)}f(s), \quad \forall s \geq 0,
$$
leading to
\begin{equation} \label{NOVA}
H_\Upsilon(s)=\int_{0}^{s}h_\Upsilon(t)\,dt \leq \frac{h(\Upsilon)}{f(\Upsilon)}F(s), \quad \forall s \geq 0,
\end{equation}
and 
$$
|f_\Upsilon(s)| \leq 2|f(s)|\leq C(1+|s|^{p-1}), \quad \mbox{for} \quad p\in\left(2,\frac{2N+4}{N}\right).
$$
On the other hand, if $s \in [0,\Upsilon]$, we have 
\begin{equation} \label{NOVA1}
\mathcal{F}_\Upsilon(s)= sf(s)-2 F(s)+\beta(sh(s)-2 H(s)). 
\end{equation}
Now, if $s \geq \Upsilon$,
$$
\mathcal{F}_\Upsilon(s)=sf(s)-2 F(s)+\beta\frac{h(\Upsilon)}{f(\Upsilon)}(sf(s)-2F(s))+H(\Upsilon)-\frac{h(\Upsilon)}{f(\Upsilon)}F(\Upsilon) 
$$
and so, by (\ref{NOVA}),
$$
\mathcal{F}_\Upsilon(s) \geq \left(1+ \beta\frac{h(\Upsilon)}{f(\Upsilon)}\right)(sf(s)-2 F(s)) .
$$
As a consequence of the above analysis,  
$$
\mathcal{F}_\Upsilon(s) \geq \sigma s^{2} \quad \mbox{for} \quad s \geq s_o.
$$

From this, we can apply Theorem \ref{T1} to get a solution $u_\beta \in E$ of $(AP)_{\beta,\Upsilon}$. Our next step is proving that there is $\Upsilon^*>0$ such that 
$ \|u_\beta\|_\infty \leq \Upsilon^*$ for $\Upsilon \geq \Upsilon^*$, because this estimate permits to conclude that $u_\beta$ is a solution of $(P_2)$ for $\Upsilon \geq \Upsilon^*$, which shows Theorem \ref{T2}. However, the existence of $\Upsilon^*$ follows from the following fact: If $I_\Upsilon:E \to \mathbb{R}$ denotes the energy functional associated with $(AP)_{\beta,\Upsilon}$, that is,
$$
I_\Upsilon(u)=\frac{1}{2}\|u\|^2 \, dx-\int_{\mathbb{R}^N} F_\Upsilon(u)\,dx, \quad \forall u \in E,
$$
we have
$$
I_\Upsilon(u_\beta) \leq J_0(u), \quad \forall u \in E \quad \mbox{and} \quad \beta \geq 0.
$$
Hence,
$$
I_\Upsilon(u_\beta) \leq c_0, \quad \forall \Upsilon \geq 0,
$$
where $c_0$ denotes the mountain pass level associated with the functional $J_0$ ( see Section 3). Since $I'_\Upsilon(u_\beta)=0$, we can argue as in Section  3 to show that there is $k>0$, which is independent of $\Upsilon$ and $\beta$, such that
$$
\|u_\beta\| \leq k, \quad \forall \Upsilon >0 \quad \mbox{and} \quad \forall \beta \in [0,\beta^*].
$$
Recalling that by $(ii)$, 
$$
|f_\Upsilon(s)|\leq C (1+ |s|^{p-1}) \quad \forall s\in \mathbb R,
$$
where the constant $C$ does not depend on $\beta$ and $\Upsilon$, the bootstrap argument found \cite[Proposition 2.15]{Rab} ensures that there is $K_*>0$, which is independent of $\beta$ and $\Upsilon$ such that
$$
\|u_\beta\|_\infty \leq K, \quad \forall \Upsilon >0 \quad \mbox{and} \quad  \beta \in [0,\beta^*].
$$ 
From this, if $\Upsilon \geq \Upsilon^*=K$, it follows that $u_\beta$ is a solution for $(P_2)$, finishing the proof of Theorem \ref{T2}.

\section{Final comments}
In this section we would like point out that if $\displaystyle{\frac{h(s)}{f(s)} \to +\infty}$ as $s \to +\infty$, we can replace $(h_3)$ by 
\begin{itemize}
	\item[$(h'_3)$] For each $s_o>0$, there is a $\sigma_o>0$ such that $$\mathcal{H}(s)=sh(s)-2 H(s)\geq -\sigma_o s^2,\mbox { for all  } |s|\geq s_o.$$
\end{itemize}
Indeed, using this assumption in (\ref{NOVA1}), we get
$$
\mathcal{F}_\Upsilon(s) \geq (\sigma -\beta \sigma_o)s^2, \quad \forall s \in [0,\Upsilon].
$$
If $\displaystyle{\frac{h(s)}{f(s)} \to +\infty}$ as $s \to +\infty$, we derive that $\beta^* \to 0$ as $\Upsilon^* \to +\infty$. Hence,  
$$
(\sigma -\beta \sigma_o)>0, \quad \forall \beta \in (0, \beta^*),
$$
for $\Upsilon^*$ large enough. From this, Theorem \ref{T2} still holds for $\Upsilon^*$ large enough.

\end{document}